\documentclass[12pt]{article}
\usepackage{amssymb}
\usepackage{mathrsfs}
\usepackage[hypertex]{hyperref}
\usepackage{amsfonts}
\usepackage{graphicx}
\usepackage{latexsym,amsmath,amssymb,amsfonts,amsthm}

\newcommand{\bcen}{\begin{center}}
\newcommand{\ecen}{\end{center}}
\newtheorem{theorem}{Theorem}[section]
\newtheorem{lemma}[theorem]{Lemma}

\newtheorem{corollary}[theorem]{Corollary}
\newtheorem{proposition}[theorem]{Proposition}
\newtheorem{remark}[theorem]{Remark}

\newtheorem{defn}[theorem]{Definition}

\setlength{\oddsidemargin}{0cm} \setlength{\evensidemargin}{0cm}
\setlength{\textwidth}{16cm} \setlength{\textheight}{22cm}

\setlength{\textwidth}{165mm} \setlength{\textheight}{230mm}
\setlength{\oddsidemargin}{0mm} \setlength{\topmargin}{-.3in}
\pagestyle{myheadings}

\begin{document}
\setcounter{page}{1}
\title{On eigenfunctions and nodal sets of the Witten-Laplacian}
\author{Ruifeng Chen,~~ Jing Mao$^{\ast}$,~~Chuanxi Wu}

\date{}
\protect \footnotetext{\!\!\!\!\!\!\!\!\!\!\!\!{~$\ast$ Corresponding author\\
MSC 2020:
35P15, 49Jxx, 35J15.}\\
{Key Words: Witten-Laplacian, Neumann eigenvalues, Laplacian, the
free membrane problem, isoperimetric inequalities. } }
\maketitle ~~~\\[-15mm]

\begin{center}
{\footnotesize Faculty of Mathematics and Statistics,\\
 Key Laboratory of Applied
Mathematics of Hubei Province, \\
Hubei University, Wuhan 430062, China\\
 Key Laboratory of Intelligent Sensing System and Security (Hubei
University), Ministry of Education\\
Emails: gchenruifeng@163.com (R. F. Chen), jiner120@163.com (J.
Mao), \\ cxw@hubu.edu.cn (C. X. Wu)
 }
\end{center}


\begin{abstract}

In this paper, we successfully establish a Courant-type nodal domain
theorem for both the Dirichlet eigenvalue problem and the closed
eigenvalue problem of the Witten-Laplacian. Moreover, we also
characterize the properties of the nodal lines of the eigenfunctions
of the Witten-Laplacian on smooth Riemannian $2$-manifolds. Besides,
for a Riemann surface with genus $g$, an upper bound for the
multiplicity of closed eigenvalues of the Witten-Laplacian can be
provided.
 \end{abstract}


\section{Introduction} \label{sect-1}
\renewcommand{\thesection}{\arabic{section}}
\renewcommand{\theequation}{\thesection.\arabic{equation}}
\setcounter{equation}{0}

Let $(M^{n},\langle\cdot,\cdot\rangle)$ be an $n$-dimensional
($n\geq2$) complete Riemannian manifold with the metric
$\langle\cdot,\cdot\rangle$. Let $\Omega\subseteq M^{n}$ be a
 domain in $M^n$, and $\phi\in C^{\infty}(M^n)$ be a
smooth\footnote{~In fact, one might see that $\phi\in C^{2}$ is
suitable to derive our main conclusions in this paper. However, in
order to avoid putting too much attention on discussion for the
regularity of $\phi$ and follow the assumption on conformal factor
$e^{-\phi}$ for the notion of \emph{smooth metric measure spaces} in
many literatures (including of course those cited in this paper),
without specification, we prefer to assume that $\phi$ is smooth on
the domain $\Omega$. } real-valued function defined on $M^n$. In
this setting, on $\Omega$, the following elliptic operator
\begin{eqnarray*}
\Delta_{\phi}:=\Delta-\langle\nabla\phi,\nabla\cdot\rangle
\end{eqnarray*}
can be well-defined, where $\nabla$, $\Delta$ are the gradient and
the Laplace operators on $M^{n}$, respectively. The operator
$\Delta_{\phi}$ w.r.t. the metric $\langle\cdot,\cdot\rangle$ is
called the \emph{Witten-Laplacian} (also called the \emph{drifting
Laplacian} or the \emph{weighted Laplacian}). The $K$-dimensional
Bakry-\'{E}mery Ricci curvature $\mathrm{Ric}^{K}_{\phi}$ on $M^{n}$
can be defined as follows
 \begin{eqnarray*}
\mathrm{Ric}^{K}_{\phi}:=\mathrm{Ric}+\mathrm{Hess}\phi-\frac{d\phi\otimes
d\phi}{K-n},
 \end{eqnarray*}
where $\mathrm{Ric}$ denotes the Ricci curvature tensor on $M^{n}$,
and $\mathrm{Hess}$ is the Hessian operator on $M^{n}$ associated to
the metric $\langle\cdot,\cdot\rangle$. Here $K>n$ or $K=n$ if
$\phi$ is a constant function. When $K=\infty$, the so-called
$\infty$-dimensional Bakry-\'{E}mery Ricci curvature
$\mathrm{Ric}_{\phi}$ (simply, \emph{Bakry-\'{E}mery Ricci
curvature} or \emph{weighted Ricci curvature}) can be defined as
follows
\begin{eqnarray*}
 \mathrm{Ric}_{\phi}:=\mathrm{Ric}+\mathrm{Hess}\phi.
\end{eqnarray*}
These notions were introduced by D. Bakry and M. \'{E}mery in
\cite{BE}. Many interesting results (under suitable assumptions on
the Bakry-\'{E}mery Ricci curvature) have been obtained -- see,
e.g., \cite{DMWW,XDL,JM1,WW}. Besides, we refer readers to
\cite[Section 1]{CM1} for a nice introduction about the motivation,
the meaning, and the reason why people considered geometric problems
under the assumption of Bakry-\'{E}mery Ricci curvature bounded.

For a bounded domain $\Omega$ (with boundary $\partial\Omega$) in a
given complete $n$-manifold $M^{n}$, $n\geq2$, consider the
Dirichlet eigenvalue problem of the Witten-Laplacian as follows
\begin{eqnarray} \label{eigen-D11}
\left\{
\begin{array}{ll}
\Delta_{\phi}u+\lambda u=0\qquad & \mathrm{in}~\Omega\subset M^{n}, \\[0.5mm]
u=0 \qquad & \mathrm{on}~{\partial\Omega},
\end{array}
\right.
\end{eqnarray}
and it is not hard to check that the operator $\Delta_{\phi}$ in
(\ref{eigen-D11}) is \textbf{self-adjoint} w.r.t. the following
inner product
\begin{eqnarray}  \label{inn-p}
\widehat{(h_{1},h_{2})}:=\int_{\Omega}h_{1}h_{2}d\eta=\int_{\Omega}h_{1}h_{2}e^{-\phi}dv,
\end{eqnarray}
 $h_{1},h_{2}\in\widehat{W}^{1,2}_{0}(\Omega)$,  where
$d\eta:=e^{-\phi}dv$ with $dv$ the Riemannian volume element of
$M^{n}$, and $\widehat{W}^{1,2}_{0}(\Omega)$ stands for a Sobolev
space, which is the completion of the set of smooth functions (with
compact support) $C^{\infty}_{0}(\Omega)$ under the following
Sobolev norm
\begin{eqnarray}  \label{Sobo-1}
\widehat{\|f\|}_{1,2}:=\left(\int_{\Omega}f^{2}d\eta+\int_{\Omega}|\nabla
f|^{2}d\eta\right)^{1/2}.
\end{eqnarray}
Clearly, if $\phi=const.$ is a constant function, then
$\Delta_{\phi}$ degenerates into the Laplacian $\Delta$, and
(\ref{eigen-D11}) becomes the classical fixed membrane problem (i.e.
the Dirichlet eigenvalue problem of the Laplacian). Since
$\Delta_{\phi}$ in (\ref{eigen-D11}) is self-adjoint w.r.t. the
inner product $\widehat{(\cdot,\cdot)}$, it is natural to ask:
 \begin{itemize}
\item \emph{Whether or not $\Delta_{\phi}$ in (\ref{eigen-D11}) has some similar
spectral properties as the Dirichlet Laplacian defined in
$\Omega\subset M^{n}$?}
 \end{itemize}
 The answer is affirmative. For instance, as we have pointed out in
 \cite[Section 1]{CM1} or \cite[pp. 789-790]{DM1}, using
 similar arguments\footnote{~In fact, as explained in \cite{CM1,DM1}, if one wants to get similar spectral
properties for the Dirichlet Witten-Laplacian, the key point is
using the weighted Riemannian volume element $d\eta=e^{-\phi}dv$ to
replace the usual one $dv$ in proofs of those spectral properties of
the eigenvalue problem of the Dirichlet Laplacian. Of course, one
needs to check carefully whether or not every step still works for
the weighted case. } to those of the classical fixed membrane
problem of the Laplacian (i.e., the discussions about the existence
of discrete spectrum, Rayleigh's theorem, Max-min theorem, Domain
monotonicity of eigenvalues with vanishing Dirichlet data, etc.
Those discussions are standard, and for details, please see for
instance \cite{IC}), it is not hard to know:
\begin{itemize}
\item The self-adjoint elliptic operator $-\Delta_{\phi}$ in
(\ref{eigen-D11}) \emph{only} has discrete spectrum, and all the
 eigenvalues in this discrete spectrum can be listed
non-decreasingly as follows
\begin{eqnarray} \label{sequence-3}
0<\lambda_{1,\phi}(\Omega)<\lambda_{2,\phi}(\Omega)\leq\lambda_{3,\phi}(\Omega)\leq\cdots\uparrow+\infty.
\end{eqnarray}
Each eigenvalue $\lambda_{i,\phi}$, $i=1,2,\cdots$, in the sequence
(\ref{sequence-3}) was repeated according to its multiplicity (which
is finite and equals to the dimension of the eigenspace of
$\lambda_{i,\phi}$). By applying the standard variational
principles, one can obtain that the $k$-th Dirichlet eigenvalue
$\lambda_{k,\phi}(\Omega)$ can be characterized as
follows\footnote{~For simplification and without specification, in
the sequel, we prefer to write $\lambda_{i,\phi}(\Omega)$ as
$\lambda_{i,\phi}$. Similar thing will be done for the closed
eigenvalues  $\lambda_{i,\phi}^{c}(\Omega)$ too.}
 \begin{eqnarray}  \label{chr-1}
 \lambda_{k,\phi}(\Omega)=\inf\left\{\frac{\int_{\Omega}|\nabla f|^{2}e^{-\phi}dv}{\int_{\Omega}f^{2}e^{-\phi}dv}
 \Bigg{|}f\in\widehat{W}^{1,2}_{0}(\Omega),f\neq0,\int_{\Omega}ff_{i}e^{-\phi}dv=0\right\},
 \end{eqnarray}
where $f_{i}$, $i=1,2,\cdots,k-1$, denotes an eigenfunction of
$\lambda_{i,\phi}(\Omega)$. Moreover, the first Dirichlet eigenvalue
$\lambda_{1,\phi}(\Omega)$ of the eigenvalue problem
(\ref{eigen-D11}) satisfies
\begin{eqnarray}  \label{chr-2}
 \lambda_{1,\phi}(\Omega)=\inf\left\{\frac{\int_{\Omega}|\nabla f|^{2}d\eta}{\int_{\Omega}f^{2}d\eta}
 \Bigg{|}f\in\widehat{W}^{1,2}_{0}(\Omega),f\neq0\right\}.
 \end{eqnarray}
\end{itemize}
If the bounded domain $\Omega\subset M^{n}$ does not have boundary,
then the closed eigenvalue problem of the Witten-Laplacian
\begin{eqnarray} \label{eigen-C11}
\Delta_{\phi}u+\lambda u=0 \qquad \mathrm{in}~\Omega
\end{eqnarray}
can be considered. Similar to the Dirichlet case, it is not hard to
know:
\begin{itemize}
\item The self-adjoint elliptic operator $-\Delta_{\phi}$ in
(\ref{eigen-C11}) \emph{only} has discrete spectrum, and all the
 eigenvalues in this discrete spectrum can be listed
non-decreasingly as follows
\begin{eqnarray} \label{sequence-3-1}
0=\lambda^{c}_{0,\phi}<\lambda^{c}_{1,\phi}(\Omega)\leq\lambda^{c}_{2,\phi}(\Omega)\leq\lambda^{c}_{3,\phi}(\Omega)\leq\cdots\uparrow+\infty.
\end{eqnarray}
It is easy to see that the first closed eigenvalue
$\lambda^{c}_{0,\phi}=0$ is simple and its eigenfunctions are
nonzero constant functions.
 Each nonzero eigenvalue $\lambda^{c}_{i,\phi}$, $i=1,2,\cdots$, in
the sequence (\ref{sequence-3-1}) was repeated according to its
multiplicity (which is finite and equals to the dimension of the
eigenspace of $\lambda^{c}_{i,\phi}$). Applying the standard
variational principles, one can obtain that the $k$-th nonzero
closed eigenvalue $\lambda^{c}_{k,\phi}(\Omega)$ can be
characterized as follows
 \begin{eqnarray}  \label{chr-1-1}
 \lambda^{c}_{k,\phi}(\Omega)=\inf\left\{\frac{\int_{\Omega}|\nabla f|^{2}e^{-\phi}dv}{\int_{\Omega}f^{2}e^{-\phi}dv}
 \Bigg{|}f\in\widehat{W}^{1,2}(\Omega),f\neq0,\int_{\Omega}ff_{i}e^{-\phi}dv=0\right\},
 \end{eqnarray}
where $f_{i}$, $i=1,2,\cdots,k-1$, denotes an eigenfunction of
$\lambda^{c}_{i,\phi}(\Omega)$. Moreover, the first nonzero closed
eigenvalue $\lambda^{c}_{1,\phi}(\Omega)$ of the eigenvalue problem
(\ref{eigen-C11}) satisfies
\begin{eqnarray}  \label{chr-2-1}
 \lambda^{c}_{1,\phi}(\Omega)=\inf\left\{\frac{\int_{\Omega}|\nabla f|^{2}d\eta}{\int_{\Omega}f^{2}d\eta}
 \Bigg{|}f\in\widehat{W}^{1,2}(\Omega),f\neq0,\int_{\Omega}fe^{-\phi}dv=0\right\}.
 \end{eqnarray}
 Here $\widehat{W}^{1,2}(\Omega)$ is the admissible space of the
 eigenvalue problem (\ref{eigen-C11}), and is actually the completion of the set of smooth functions  $C^{\infty}(\Omega)$ under the
Sobolev norm (\ref{Sobo-1}).
\end{itemize}
Of course, for a bounded domain with boundary, similar to the
Laplacian case, one can also propose the Neumann eigenvalue problem
and the mixed eigenvalue problem for the Witten-Laplacian. One can
see, e.g. \cite{CM,CM1}, for some interesting spectral properties of
the Neumann eigenvalue problem of the Witten-Laplacian.

Maybe one might find previous contents (i.e. the existence
conclusion on discrete spectrum and also the characterizations of
eigenvalues of the eigenvalue problems (\ref{eigen-D11}) and
(\ref{eigen-C11}) of the Witten-Laplacian) in other literatures (not
ours), but for the completion of the structure of this paper and
also for the convenience of readers, here we prefer to write them
down.

In order to state our main conclusions clearly, we need the
following concept, which can be found, e.g., in \cite{IC}.

\begin{defn}
Let $f:M^{n}\rightarrow\mathbb{R}\in C^{0}$. Then the \textbf{nodal
set} is the set $f^{-1}[0]$, and a \textbf{nodal domain} of $f$ is a
component on $\overline{M^n}\setminus f^{-1}[0]$.
\end{defn}

For the eigenvalue problems (\ref{eigen-D11}) and (\ref{eigen-C11}),
first we can get the following Courant-type nodal domain theorem.

\begin{theorem} \label{theo-1}
(Nodal domain theorem for the Witten-Laplacian) Assume that $\Omega$
is a regular domain on a given Riemannian $n$-manifold $M^{n}$. For
the Dirichlet eigenvalue problem (\ref{eigen-D11}), its eigenvalues
consist of a non-decreasing sequence (\ref{sequence-3}). Denote by
$f_{i}$ an eigenfunction of the $i$-th eigenvalue
$\lambda_{i,\phi}$, $i=1,2,3,\cdots$, and
$\{f_{1},f_{2},f_{3},\cdots\}$ forms a complete orthogonal basis of
${\widehat{L}^2(\Omega)}$. Then for each $k=1,2,3,\cdots$, the
number of nodal domains of $f_{k}$ is less than or equal to $k$.
Similar conclusion can be obtained for the closed eigenvalue problem
(\ref{eigen-C11}) as well, that is, for each $k=0,1,2,3,\cdots$, the
number of nodal domains of an eigenfunction of the $k$-th closed
eigenvalue $\lambda^{c}_{k,\phi}$ is less than or equal to $k+1$.
\end{theorem}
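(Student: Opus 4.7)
The plan is to run the classical Courant--Hilbert nodal domain argument in the weighted setting, using the variational characterizations (\ref{chr-1}) and (\ref{chr-1-1}) with the measure $d\eta=e^{-\phi}\,dv$ throughout. Both halves of the theorem follow the same contradiction scheme, so I would handle the Dirichlet case in detail and then indicate the parallel modifications for the closed case.

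For the Dirichlet half, I would suppose, for contradiction, that the eigenfunction $f_k$ of $\lambda_{k,\phi}$ possesses at least $k+1$ nodal domains $D_1,\ldots,D_{k+1}$. Set $g_j := f_k$ on $D_j$ and $g_j := 0$ on $\Omega\setminus D_j$. Since $f_k$ vanishes on $\partial D_j$ (inside $\Omega$) and on $\partial\Omega$, each $g_j$ lies in $\widehat{W}^{1,2}_0(\Omega)$, and the $g_j$ have pairwise disjoint supports. Then I would form the trial function $g := \sum_{j=1}^{k} c_j g_j$ and select a nontrivial $(c_1,\ldots,c_k)$ satisfying the $k-1$ homogeneous linear orthogonality relations $\widehat{(g,f_i)}=0$, $i=1,\ldots,k-1$; such a vector exists by dimension count ($k$ unknowns, $k-1$ equations). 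By (\ref{chr-1}), this admissible $g$ yields $\lambda_{k,\phi} \leq \int_\Omega |\nabla g|^2\,d\eta / \int_\Omega g^2\,d\eta$.

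The crucial step is to promote this into an equality. Using disjoint supports and integrating by parts on each $D_j$ (legal because $f_k=0$ on $\partial D_j$ and $\Delta_\phi$ is self-adjoint w.r.t. $d\eta$), one obtains
\begin{eqnarray*}
\int_{D_j} |\nabla f_k|^2\,e^{-\phi}\,dv \,=\, -\int_{D_j} f_k\,\Delta_\phi f_k\,e^{-\phi}\,dv \,=\, \lambda_{k,\phi}\int_{D_j} f_k^2\,e^{-\phi}\,dv,
\end{eqnarray*}
and summing over $j=1,\ldots,k$ with weights $c_j^2$ yields $\int_\Omega |\nabla g|^2\,d\eta = \lambda_{k,\phi}\int_\Omega g^2\,d\eta$. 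Hence $g$ attains the infimum in (\ref{chr-1}), and a standard Lagrange-multiplier argument (pairing the Euler--Lagrange equation $-\Delta_\phi g = \lambda_{k,\phi} g + \sum_{i<k}\mu_i f_i$ against each $f_j$, $j<k$, in $\widehat{(\cdot,\cdot)}$ kills every $\mu_j$) shows that $g$ itself is a nontrivial eigenfunction of $\Delta_\phi$ associated with $\lambda_{k,\phi}$. However, $g\equiv 0$ on the nonempty open set $D_{k+1}$, contradicting the strong unique continuation property for second-order linear elliptic operators with smooth lower-order coefficients (Aronszajn's theorem).

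For the closed eigenvalue problem (\ref{eigen-C11}) I would repeat the same steps, except that I would assume at least $k+2$ nodal domains, form $g=\sum_{j=1}^{k+1}c_j g_j$, and impose the $k$ orthogonality conditions $\widehat{(g,f_i)}=0$ for $i=0,1,\ldots,k-1$; a nontrivial $g$ exists since $k+1>k$, and then (\ref{chr-1-1}) together with unique continuation on $D_{k+2}$ closes the argument. The only non-routine ingredient -- and therefore the main obstacle -- is the appeal to unique continuation in the weighted setting; I would justify it by observing that $\Delta_\phi=\Delta-\langle\nabla\phi,\nabla\cdot\rangle$ differs from the Laplacian only by a smooth first-order term, so Aronszajn's classical unique continuation theorem applies verbatim.
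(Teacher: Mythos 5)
Your argument follows the same Courant--Hilbert scheme as the paper's, but it contains a genuine gap precisely where the paper does the real work. You assert that each $g_j$ belongs to $\widehat{W}^{1,2}_{0}(\Omega)$ and that one may integrate by parts on each nodal domain $D_j$, justifying this only by noting that $f_k$ vanishes on $\partial D_j$ and that $\Delta_\phi$ is self-adjoint on $\Omega$. Neither observation suffices: $\partial D_j$ is merely a level set of the eigenfunction and need not be a regular hypersurface, so the divergence theorem does not apply on $D_j$ directly, and the assertion that the distributional gradient of $g_j$ coincides with $\nabla f_k$ on $D_j$ and vanishes outside requires proof. The paper splits its argument into two cases for exactly this reason. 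Case~I assumes the nodal domains are normal domains, and there the argument is essentially the one you give. Case~II makes no such assumption and is the substantive part: Lemma~\ref{lemma2-1} proves the Green-type identity $\int_D v\,\Delta_\phi u\,d\eta=\int_D\nabla u\cdot\nabla v\,d\eta$ on an arbitrary nodal domain $D$ by exhausting $D$ with the regular sublevel sets $D_\epsilon=\{u>\epsilon\}$ along a sequence of regular values $\epsilon_i\downarrow 0$ furnished by Sard's theorem and passing to the limit, and Lemma~\ref{lemma2-2} shows $\lambda=\lambda_{1,\phi}(D)$ for any nodal domain $D$ of a $\lambda$-eigenfunction. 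So your claim that unique continuation is the only non-routine ingredient understates the difficulty; the regularity of nodal domains is the other one, and it must be handled before the Rayleigh-quotient equality and the sandwich argument are legitimate. On the positive side, your explicit appeal to Aronszajn's unique continuation theorem for the final contradiction is cleaner than the paper's loosely worded reference to the maximum principle, and your Lagrange-multiplier step usefully spells out why the constrained minimizer $g$ is itself a genuine eigenfunction, something the paper leaves implicit.
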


\begin{remark}
\rm{ (1) As the Laplacian case, one could also try to get
Courant-type nodal domain results for the Neumann eigenvalue problem
and the mixed eigenvalue problem of the Witten-Laplacian (by using a
similar argument to the one given in Section \ref{sect-2}). However,
in this paper, we just focus on the Dirichlet eigenvalues and closed
eigenvalues of the Witten-Laplacian. \\
(2) Maybe spectral geometers have already known the conclusion of
Theorem \ref{theo-1}, but for the completion of the structure of
this paper, we still write it down formally. Another reason is that
in our another recent work \cite{CM}, we have used the conclusion of
Theorem \ref{theo-1} to get the Hong-Krahn-Szeg\H{o} type
isoperimetric inequalities for the second Dirichlet eigenvalue of
the Witten-Laplacian on bounded domains in both the Euclidean spaces
and the hyperbolic spaces. BTW, by making necessary changes to the
proof of Courant-type theorem for the characterization of nodal
domains to eigenfunctions of the Laplacian in the Riemannian case
given by B\'{e}rard-Meyer \cite{BM}, one might get our proof of
Theorem \ref{theo-1} shown in Section \ref{sect-2}.}
\end{remark}

Second, we can get several interesting properties concerning the
nodal set and the multiplicity of eigenvalues.

\begin{theorem}  \label{theo-2}
Assume that $M^2$ is a smooth Riemannian $2$-manifold  without
boundary (not necessarily compact). If a smooth function $f\in
C^\infty(M^2)$ satisfies $(\Delta_\phi+h(x))f=0$, $h\in
C^\infty(M^2)$, then except for a few isolated points, the set of
nodes of $f$ forms a smooth curve.
\end{theorem}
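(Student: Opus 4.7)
The plan is to follow the classical strategy pioneered by Bers and Cheng for second-order elliptic operators in dimension two, adapted to the weighted operator $\Delta_\phi + h(x)$. Fix a point $p \in f^{-1}(0)$ and distinguish two cases according to whether $\nabla f(p) \neq 0$ or $\nabla f(p) = 0$. In the first case, choose local coordinates near $p$; since $\nabla f(p) \neq 0$, the implicit function theorem immediately yields that $f^{-1}(0)$ is the graph of a smooth function of one of the coordinates in a neighborhood of $p$, hence a smooth curve there. Thus the content of the theorem is entirely about the second case, namely \emph{critical zeros} of $f$, and the goal is to prove that these form a discrete (in particular locally finite) subset of $M^2$.

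Second, I would write the operator in local coordinates. Since $\Delta_\phi = \Delta - \langle \nabla\phi,\nabla\cdot\rangle$, in any chart the equation $(\Delta_\phi + h)f = 0$ reads
\begin{equation*}
\sum_{i,j} g^{ij}(x)\,\partial_i \partial_j f + \sum_{i} b^i(x)\,\partial_i f + h(x)\,f = 0,
\end{equation*}
with $g^{ij}, b^i, h$ smooth and $(g^{ij})$ positive definite; this is a uniformly elliptic second-order equation with smooth coefficients in $\mathbb{R}^2$. Assuming $f \not\equiv 0$ (otherwise the statement is vacuous), Aronszajn's unique continuation theorem implies that at every zero $p$ of $f$, not all Taylor coefficients of $f$ vanish. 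Pick normal coordinates at $p$, so that $g^{ij}(p) = \delta^{ij}$, and let $N \geq 1$ be the smallest order for which some derivative $\partial^\alpha f(p)$, $|\alpha| = N$, is nonzero. Write $f = P_N + R$ with $P_N$ the degree-$N$ homogeneous Taylor polynomial and $R = O(|x|^{N+1})$. Substituting into the equation and extracting the leading homogeneous term, the lower-order contributions in $b^i, h$ drop out (they raise the order by one), and the terms $g^{ij}(p)\partial_i\partial_j P_N = \Delta_{\mathbb{R}^2} P_N$ must cancel with the next order of the expansion, forcing $\Delta_{\mathbb{R}^2} P_N = 0$. Hence $P_N$ is a homogeneous harmonic polynomial in two real variables.

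Third, I would exploit the two-dimensional miracle: every homogeneous harmonic polynomial of degree $N$ in $\mathbb{R}^2$ is of the form $\mathrm{Re}(c\, z^N)$ for some $c \in \mathbb{C}\setminus\{0\}$, whose zero set is exactly $N$ straight lines through the origin meeting at equal angles $\pi/N$. Away from the origin, $\nabla P_N \neq 0$ on each of these lines, so the leading model has an isolated critical zero at the origin. I would then invoke the Bers-type structural lemma (see, e.g., Cheng's paper ``Eigenfunctions and nodal sets'', Comment. Math. Helv. \textbf{51} (1976), 43--55, whose proof is purely local and uses only smoothness and ellipticity of the coefficients, hence carries over verbatim to $\Delta_\phi + h$) which states that there exists a local $C^1$ diffeomorphism near $p$ taking $f^{-1}(0)$ onto $P_N^{-1}(0)$. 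In particular, near $p$ the nodal set of $f$ consists of $N$ smooth arcs meeting transversally at $p$, and at every other point of these arcs the gradient of $f$ is nonzero.

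Combining these pieces, the critical zeros of $f$ are locally isolated, so on any relatively compact subset of $M^2$ there are only finitely many such points; at every other zero the nodal set is locally a smooth curve by the implicit function theorem. The principal obstacle in writing this out carefully is the Bers-type normal-form step: showing rigorously that the full nodal set of $f$ is (topologically, and away from $p$ smoothly) the same as that of $P_N$. This requires a quantitative comparison of $f$ and $P_N$ on small circles $|x| = r$ together with an argument that each arc of $P_N^{-1}(0)$ perturbs to a unique arc of $f^{-1}(0)$ without producing extra components, which ultimately relies on the explicit factorization of $P_N$ into distinct linear forms available specifically in two dimensions.
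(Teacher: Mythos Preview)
Your proposal is correct and follows essentially the same route as the paper: both reduce to Cheng's classical argument by observing that in local (normal) coordinates $(\Delta_\phi+h)f=0$ is a second-order uniformly elliptic equation with smooth coefficients, then invoke Aronszajn's unique continuation to get finite vanishing order, identify the leading homogeneous part $P_N$ as a harmonic polynomial on $\mathbb{R}^2$, and finally apply the Bers--Cheng $C^1$ normal-form lemma to transfer the nodal structure of $P_N$ to that of $f$. The only cosmetic difference is that the paper quotes Bers' theorem directly for the expansion $f=\mathcal{P}_N+O(|x|^{N+\varepsilon})$ and the harmonicity of $\mathcal{P}_N$, whereas you derive the latter by substituting the Taylor expansion into the equation; since $f\in C^\infty$, both are valid and lead to the same conclusion.
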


\begin{corollary} \label{coro-1}
Assume that $M^2$ is a Riemannian $2$-manifold. Then, for any
solution of the equation $(\Delta_\phi+h(x))f=0$, $h\in
C^\infty(M)$, one has:
 \begin{itemize}
\item The critical points on the nodal lines are isolated.

\item When the nodal lines meet, they form an equiangular system.

\item The nodal lines consist of a number of $C^2$-immersed closed
smooth curves. Therefore, when $M^2$ is compact, they are a number
of $C^2$-immersed circles. A $C^2$-immersed circle means
$\Phi(\mathbb{S}^1)$, where $\Phi:\mathbb{S}^1\rightarrow M^2$ is a
$C^2$ immersion.
\end{itemize}
\end{corollary}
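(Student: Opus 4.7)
\medskip

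\noindent\textbf{Proof plan for Corollary~\ref{coro-1}.} My plan is to leverage Theorem~\ref{theo-2} for the smoothness of the nodal set outside a discrete bad set, and then to run a local Taylor expansion at the remaining singular points to pin down the branching structure. For the first bullet, I would observe that at any point $x_0$ of the nodal set where $\nabla f(x_0)\neq 0$, the implicit function theorem already produces a local smooth parameterization of $f^{-1}(0)$ near $x_0$. So a point of the nodal set at which $f$ fails to be smoothly parameterized is necessarily a critical point of $f$, and since Theorem~\ref{theo-2} has ruled out non-smooth points outside a discrete subset, the critical points of $f$ lying on the nodal set must be isolated.

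For the second bullet, I would fix a critical point $x_0$ on the nodal set and choose normal coordinates centered there. Writing $\Delta_\phi=\Delta-\langle\nabla\phi,\nabla\cdot\rangle$ and expanding $f=P_N+O(|x|^{N+1})$ with $P_N$ the first nonvanishing homogeneous Taylor polynomial (of some order $N\ge 2$), substitution into $(\Delta_\phi+h)f=0$ shows that the drift term $\langle\nabla\phi,\nabla f\rangle$ has order $N-1$ and $hf$ has order $N$, while $\Delta f$ has order $N-2$; matching lowest orders forces $\Delta P_N=0$. In the plane any harmonic homogeneous polynomial of degree $N$ has the form $c\,r^N\cos(N\theta-\theta_0)$, whose zero set is exactly $N$ lines through the origin meeting at the equal angles $\pi/N$. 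A Bers--Hartman--Wintner style local description of nodal sets of elliptic equations then identifies $f^{-1}(0)$ near $x_0$ with a $C^{1,\alpha}$ perturbation of this configuration, yielding the equiangular system of nodal lines.

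For the third bullet, I would combine the local pictures. Away from critical points the nodal set is a smooth $1$-manifold; at each critical point exactly $2N$ local branches emerge, grouped by tangency into $N$ pairs of opposite rays, each pair assembling into a single $C^2$ arc passing transversally through $x_0$. The $C^2$ regularity across $x_0$ follows from the refined asymptotic expansion of $f$ beyond the leading harmonic polynomial already exploited in Theorem~\ref{theo-2}. Since no arc can terminate at a noncritical point (by the implicit function theorem) and every arc is continued through every critical point by the pairing just described, each connected component of the nodal set has no boundary, and hence is the image $\Phi(\mathbb{S}^1)$ of a $C^2$ immersion $\Phi:\mathbb{S}^1\to M^2$; when $M^2$ is compact, each component is compact and therefore a $C^2$-immersed circle.

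The main obstacle will be the $C^2$ regularity of the immersion across critical points: it rests on the sharp local description of $f$ at a degenerate zero of $(\Delta_\phi+h)f=0$ provided by the Bers--Hartman--Wintner--Aronszajn theory, which must be recycled from the proof of Theorem~\ref{theo-2}. Once this input is in hand, the topological assembly of the branches into $C^2$-immersed circles is a routine pairing argument, and the equiangularity and isolation statements are immediate consequences of the harmonic polynomial model.
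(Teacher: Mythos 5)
The paper's ``proof'' of this corollary is a single sentence---``The assertions of Corollary \ref{coro-1} can be obtained by directly applying Theorem \ref{theo-1}''---which is a typo for Theorem \ref{theo-2} and simply defers to Cheng's original arguments in \cite{SYC}. Your proposal fills in exactly those arguments, and the overall approach (harmonic leading polynomial $\mathcal{P}_N \sim r^N\cos(N\theta-\theta_0)$, the $C^1$ normal form from Lemma \ref{lemma2-4}, pairing antipodal branches, and assembling boundaryless $1$-manifolds) is the same route Cheng took and what the paper implicitly relies on. So the method matches; you have supplied the detail the paper omits.

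Two points deserve tightening. First, equiangularity: a generic $C^1$ diffeomorphism does not preserve angles, so the asymptotic equal spacing $\pi/N$ of the $N$ nodal lines at $x_0$ is not automatic from Lemma \ref{lemma2-4} as stated. You need the additional fact (which does hold in Bers' construction and in Cheng's Lemma 2.4) that $\Phi(x)=x+o(|x|)$, i.e. $D\Phi(0)=\mathrm{Id}$, so that tangent directions at the origin are transported without distortion; as written your ``$C^{1,\alpha}$ perturbation'' phrasing elides this step. Second, the $C^2$ immersion: Lemma \ref{lemma2-4} only gives a $C^1$ conjugacy to $\mathcal{P}_N$, so while each branch is a $C^1$ arc through $x_0$ and $C^\infty$ away from the discrete critical set, the claimed $C^2$ regularity across critical points does not ``follow from the refined asymptotic expansion already exploited in Theorem~\ref{theo-2}'' as you say; that theorem's proof never goes beyond $C^1$ at singular points. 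You correctly flag this as the main obstacle, but it is not closed---one actually needs the sharper Hartman--Wintner-type analysis (as Cheng does) to upgrade the arcs to $C^2$ through the singularities. The topological assembly into closed immersed curves is fine once that regularity is in hand.
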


\begin{theorem} \label{theo-3}
Suppose that $M^2$ is a compact Riemann surface of genus $g$. Then
the multiplicity of the $i$-th closed eigevnalue
$\lambda^{c}_{i,\phi}(M^2)$ is less than or equal to
$(2g+i+1)(2g+i+2)/2$.
\end{theorem}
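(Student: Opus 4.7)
The plan is to combine three ingredients already supplied by the preceding statements with the classical Euler formula for a graph embedded in $M^2$: the Courant-type bound on the number of nodal domains (Theorem \ref{theo-1}), the description of nodal sets as $C^2$-immersed curves meeting equiangularly at isolated critical points (Theorem \ref{theo-2} and Corollary \ref{coro-1}), and the identity $\chi(M^2)=2-2g$. This is Cheng's classical multiplicity argument for the Laplacian on surfaces, with each geometric input replaced by its Witten-Laplacian analogue made available by the previous results.

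Let $E_0$ be the eigenspace of $\lambda^{c}_{i,\phi}(M^2)$, set $m:=\dim E_0$, and put $k:=2g+i+1$; it suffices to rule out $m>k(k+1)/2$. Fix an arbitrary $p\in M^2$ and consider the evaluation map $J_p:E_0\to J^{k-1}_p$ sending an eigenfunction to its $(k-1)$-jet at $p$. Since $\dim J^{k-1}_p=\sum_{j=0}^{k-1}(j+1)=k(k+1)/2<m$, the kernel of $J_p$ is nontrivial and one extracts a nonzero $f\in E_0$ vanishing at $p$ to order at least $k$. By Theorem \ref{theo-2} and Corollary \ref{coro-1}, the nodal set $Z:=f^{-1}(0)$ consists of $C^2$-immersed smooth curves whose critical points are isolated, and at each critical point the meeting nodal arcs form an equiangular system whose cardinality equals twice the order of vanishing; in particular $p$ is a critical nodal point from which at least $2k$ nodal arcs emanate. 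Now view $Z$ as a graph $\Gamma\subset M^2$ whose vertices $V(\Gamma)$ are the critical points of $f$ on $Z$, edges $E(\Gamma)$ are the open arcs of $Z$ between consecutive vertices, and faces $F(\Gamma)$ are the connected components of $M^2\setminus Z$. Every vertex has degree at least $4$ (at least two nodal lines cross there) while $\deg(p)\geq 2k$, so the handshake lemma gives
\begin{equation*}
2|E(\Gamma)|=\sum_{v\in V(\Gamma)}\deg(v)\geq 2k+4\bigl(|V(\Gamma)|-1\bigr),
\end{equation*}
i.e.\ $|E(\Gamma)|\geq k+2|V(\Gamma)|-2$. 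Euler additivity $\chi(M^2)=\chi(\Gamma)+\chi(M^2\setminus\Gamma)$ together with $\chi(F)\leq 1$ for each complementary face (all faces are open surfaces with nonempty boundary) yields $|F(\Gamma)|\geq 2-2g+|E(\Gamma)|-|V(\Gamma)|$, whence
\begin{equation*}
|F(\Gamma)|\geq 2-2g+(k-2)+|V(\Gamma)|\geq k-2g+1=i+2.
\end{equation*}
This contradicts Theorem \ref{theo-1}, which bounds the number of nodal domains of $f$ by $i+1$. Hence $m\leq k(k+1)/2=(2g+i+1)(2g+i+2)/2$.

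The step I expect to be most delicate is the local identification ``vanishing order $\ell$ at $p$ implies exactly $\ell$ nodal lines crossing at $p$ in an equiangular configuration''. This rests on the fact that at a zero of order $\ell$ the leading homogeneous Taylor polynomial $P_\ell$ of $f$ must be harmonic on $\mathbb{R}^2$, because in local coordinates the first-order drift term $\langle\nabla\phi,\nabla\cdot\rangle$ and the zeroth-order potential $h$ in $(\Delta_\phi+h)f=0$ contribute at strictly lower weighted order than the principal part $\Delta$ at the base point, so they do not affect the leading coefficient. Any harmonic polynomial of degree $\ell$ on $\mathbb{R}^2$ factors into $\ell$ real linear forms, each cutting out a nodal line through the origin, which reproduces exactly the equiangular configuration supplied by Corollary \ref{coro-1}. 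Once this structural input is in hand, the remainder of the argument is essentially bookkeeping with Euler's formula.
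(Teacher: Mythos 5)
Your argument is correct and follows the same two-step strategy the paper uses: a jet-space dimension count (there are $(2g+i+1)(2g+i+2)/2$ independent Taylor coefficients through order $2g+i$, so excess multiplicity forces an eigenfunction vanishing to order at least $2g+i+1$ at a chosen point), followed by a topological bound showing such high vanishing order produces too many nodal domains, contradicting Theorem \ref{theo-1}. The only difference is cosmetic: where the paper invokes Cheng's combinatorial Lemma \ref{lemma3-4} as a black box to pass from the $2k$ nodal arcs at $p$ to a lower bound on nodal domains, you carry out the underlying Euler-characteristic and handshake computation directly on the nodal graph, which makes the step more self-contained (and in fact avoids having to sort the arcs through $p$ into distinct closed immersed circles before applying the cited lemma).
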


This paper is organized as follows. The proof of Theorem
\ref{theo-1} will be given in Section \ref{sect-2}. The proofs of
Theorems \ref{theo-2} and \ref{theo-3}, Corollary \ref{coro-1} will
be shown in Section \ref{sect-3}.

\section{Proof of Theorem \ref{theo-1}} \label{sect-2}
\renewcommand{\thesection}{\arabic{section}}
\renewcommand{\theequation}{\thesection.\arabic{equation}}
\setcounter{equation}{0}

We divide the proof of Theorem \ref{theo-1} into two cases:
\begin{itemize}
\item \textbf{Case I}. When $f_{k}$'s nodal domains are
normal domains;

\item \textbf{Case II}. When there is no assumption made on the
nodal domains, we only discuss the closed and Dirichlet eigenvalue
problems of the Witten-Laplacian, i.e. the eigenvalue problems
(\ref{eigen-D11}) and (\ref{eigen-C11}).
\end{itemize}
For \textbf{Case I}, let $G_1, G_2,\cdots, G_k, G_{k+1}, \cdots$ be
the nodal domains of $f_{k}$, and define
\begin{eqnarray*}
\varphi_{j}:=f_{k}|_{G_j}, \qquad \mathrm{on}~ G_j
\end{eqnarray*}
and
\begin{eqnarray*}
\varphi_{j}:=0, \qquad \mathrm{on}~ \overline{\Omega}\setminus G_j
\end{eqnarray*}
for each $j=1,2,\cdots,k$. It is not hard to know that there exists
a nontrivial function
\begin{eqnarray} \label{2-1}
\mathcal{F}:=\sum\limits_{j=1}^{k}\alpha_{j}\varphi_{j}
\end{eqnarray}
satisfying
\begin{eqnarray*}
\widehat{(\mathcal{F},f_{1})}=\widehat{(\mathcal{F},f_{2})}=\cdots=\widehat{(\mathcal{F},f_{k-1})}=0,
\end{eqnarray*}
which is equivalent with
 \begin{eqnarray} \label{2-2}
\sum\limits_{j=1}^{k}\alpha_{j}\widehat{(\varphi_{j},f_{l})}=0,
\qquad l=1,2,\cdots,k-1.
 \end{eqnarray}
 In fact, if one treats $\alpha_{1},\cdots,\alpha_{k}$ as
unknowns and $\widehat{(\varphi_{j},f_{l})}$ as coefficients, then
the system (\ref{2-2}) has more unknowns than equations, which
implies a nontrivial solution
$(\alpha_{1},\alpha_{2},\cdots,\alpha_{k})$ must exist. Then using a
very similar argument to the proof of Rayleigh's theorem of the
Laplacian (see e.g. \cite[Chapter I]{IC}), one can obtain
 \begin{eqnarray} \label{2-3}
\lambda_{k,\phi}(\Omega)\leq\frac{\int_\Omega|\nabla\mathcal{F}|^2d\eta}{\int_{\Omega}
\mathcal{F}^2d\eta}.
\end{eqnarray}
On the other hand, by the definition of $\mathcal{F}$ (i.e.
(\ref{2-1})), it is not hard to see
\begin{eqnarray} \label{2-4}
\frac{\int_\Omega|\nabla\mathcal{F}|^2d\eta}{\int_\Omega\mathcal{F}^2d\eta}&=&\frac{\sum\limits_{j,l=1}^{k}\int_\Omega\alpha_{j}\nabla\varphi_{j}\alpha_{l}\nabla\varphi_{l}
d\eta}{\sum\limits_{j,l=1}^{k}\int_\Omega \alpha_{j}\varphi_{j}\alpha_{l}\varphi_{l}d\eta} \nonumber \\
&=&
\frac{\sum\limits_{j,l=1}^{k}\left(\sum\limits_{i}\int_{G_i}\alpha_{j}\nabla\varphi_{j}\alpha_{l}\nabla\varphi_{l}
d\eta\right)}{\sum\limits_{j,l=1}^{k}\left(\sum\limits_{i}\int_{G_i}
\alpha_{j}\varphi_{j}\alpha_{l}\varphi_{l}d\eta\right)}\nonumber\\
 &=& \frac{\sum\limits_{i=1}^{k}\int_{G_i}\alpha^{2}_{i}|\nabla\varphi_{i}|^{2}
d\eta}{\sum\limits_{i=1}^{k}\int_{G_i}
\alpha^{2}_{i}\varphi^{2}_{i}d\eta}.
\end{eqnarray}
Since for each $i=1,2,\cdots,k$, by the divergence theorem and the
definition of $\varphi_{i}$, it follows that
\begin{eqnarray*}
-\int_{G_i}\varphi_{i}\Delta_{\phi}\varphi_{i}d\eta=\int_{G_i}|\nabla\varphi_{i}|^2d\eta=\lambda_{k,\phi}
\int_{G_i}\varphi_{i}^{2}d\eta,
\end{eqnarray*}
combining this fact with (\ref{2-4}) yields
\begin{eqnarray} \label{2-5}
\frac{\int_\Omega|\nabla\mathcal{F}|^2d\eta}{\int_\Omega\mathcal{F}^2d\eta}=\lambda_{k,\phi}.
\end{eqnarray}
Therefore, from (\ref{2-3}) and (\ref{2-5}), one easily knows that
$\mathcal{F}$ is an eigenfunction corresponding to
$\lambda_{k,\phi}$, and $\mathcal{F}|_{G_{k+1}}=0$. Then using the
maximum principle of the second-order elliptic PDEs to the system
(\ref{eigen-D11}), it follows that  $\mathcal{F}\equiv0$ in
$\Omega$. This is a contradiction. So, this completes the proof of
the assertion of Theorem \ref{theo-1} for Dirichlet eigenvalues of
the Witten-Laplacian in \textbf{Case I}.

Making suitable minor changes to the above argument, one can get the
nodal domain result for closed eigenvalues of the Witten-Laplacian
directly.

In \textbf{Case II}, the divergence theorem cannot be directly used
for calculations. Therefore, we first need to establish an integral
formula for the nodal domains of eigenfunctions.

\begin{lemma} \label{lemma2-1}
Assume that $u\in C^2$, $v\in C^1$, and $D$ is a nodal domain of the
eigenfunction $u$. Then we have
\begin{eqnarray*}
-\int_Dv \Delta_{\phi}u d\eta=\int_D \nabla u\cdot\nabla vd\eta.
\end{eqnarray*}
\end{lemma}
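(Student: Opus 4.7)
The main obstacle is that the nodal domain $D$ need not have smooth boundary: although the nodal set $\{u=0\}$ is generically a smooth hypersurface, it may contain singular ``crossing'' points where $\nabla u=0$ as well, so the classical divergence theorem cannot be applied directly on $D$. The plan is therefore to exhaust $D$ from inside by sublevel sets with smooth boundary, apply the weighted Green's identity on each approximating piece, and pass to the limit while controlling the boundary contribution via the co-area formula.

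After replacing $u$ by $-u$ if necessary, I may assume $u>0$ on $D$, and I would introduce the interior approximations $D_{\epsilon}:=\{x\in D:u(x)>\epsilon\}$ for $\epsilon>0$. By Sard's theorem applied to $u\in C^{2}$, almost every $\epsilon>0$ is a regular value of $u|_{D}$, and for any such $\epsilon$ the boundary $\partial D_{\epsilon}=\{u=\epsilon\}\cap D$ is a smooth embedded hypersurface with outward unit normal $\nu=-\nabla u/|\nabla u|$.

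Next, using the pointwise identity $e^{-\phi}\Delta_{\phi}u=\mathrm{div}(e^{-\phi}\nabla u)$ together with the classical divergence theorem on $D_{\epsilon}$, I obtain, for every regular value $\epsilon$,
\[
\int_{D_{\epsilon}}v\,\Delta_{\phi}u\, d\eta+\int_{D_{\epsilon}}\nabla u\cdot\nabla v\, d\eta=-\int_{\partial D_{\epsilon}}v\, e^{-\phi}|\nabla u|\, d\sigma.
\]
Since $D\subset\Omega$ is precompact and $u,v,\nabla u,\nabla v$ are continuous on $\overline{D}$, dominated convergence lets me pass to the limit on the left-hand side as $\epsilon\to 0^{+}$, recovering $\int_{D}v\,\Delta_{\phi}u\, d\eta+\int_{D}\nabla u\cdot\nabla v\, d\eta$.

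The crux is to extract a sequence of regular values $\epsilon_{n}\downarrow 0$ along which the boundary integral tends to zero. For this I would appeal to the co-area formula: for any $\delta>0$,
\[
\int_{0}^{\delta}\left(\int_{\{u=\epsilon\}\cap D}v\, e^{-\phi}|\nabla u|\, d\sigma\right)d\epsilon=\int_{\{0<u<\delta\}\cap D}v\, e^{-\phi}|\nabla u|^{2}\, dv.
\]
The right-hand side tends to $0$ as $\delta\to 0$ by dominated convergence, because $\{0<u<\delta\}\cap D$ shrinks to a null set while $|v|\, e^{-\phi}|\nabla u|^{2}$ is bounded on the compact set $\overline{D}$. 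Hence the inner integral on the left, viewed as an $L^{1}$ function of $\epsilon$ on $(0,\delta)$, has arbitrarily small total integral as $\delta\to 0$, so a suitable sequence of regular values $\epsilon_{n}\downarrow 0$ satisfies $\int_{\partial D_{\epsilon_{n}}}v\, e^{-\phi}|\nabla u|\, d\sigma\to 0$. Substituting $\epsilon=\epsilon_{n}$ and letting $n\to\infty$ in the identity of the previous paragraph then yields the asserted integration-by-parts formula.
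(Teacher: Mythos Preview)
Your exhaustion by regular sublevel sets is the right set--up, but the last step contains a genuine gap. From
\[
\int_{0}^{\delta}\!\left(\int_{\{u=\epsilon\}\cap D}|v|\,e^{-\phi}|\nabla u|\,d\sigma\right)d\epsilon\;\longrightarrow\;0\qquad(\delta\to 0)
\]
you \emph{cannot} conclude that the inner integral tends to zero along some sequence $\epsilon_{n}\downarrow 0$: already the constant function $h\equiv 1$ satisfies $\int_{0}^{\delta}h=\delta\to 0$ yet never becomes small. Worse, the gap is irreparable for arbitrary $v\in C^{1}$, because the boundary contribution genuinely need not vanish. Take $n=1$, $\phi=0$, $D=(0,1)$, $u(x)=\sin\pi x$ (so $\Delta u=-\pi^{2}u$) and $v\equiv 1$: then $\partial D_{\epsilon}$ consists of two points at which $|u'|\to\pi$, so $\int_{\partial D_{\epsilon}}v\,|\nabla u|\,d\sigma\to 2\pi$; correspondingly $\int_{D}v\,\Delta u\,dx=-2\pi$ while $\int_{D}\nabla u\cdot\nabla v\,dx=0$, and the asserted identity fails with either sign.

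The paper's argument is organised differently---it applies Green's formula to the truncation $u_{\epsilon}:=(u-\epsilon)\chi_{D_{\epsilon}}$, invoking $u_{\epsilon}|_{\partial D_{\epsilon}}=0$, and then bounds the defect by $c\,|D\setminus D_{\epsilon}|_{\phi}$---but it suffers from the same omission: since $\partial_{\nu}u_{\epsilon}=\partial_{\nu}u$ on $\partial D_{\epsilon}$, the boundary term in Green's formula does not disappear merely because $u_{\epsilon}$ vanishes there. Both proofs become correct in the only case actually used afterwards (Lemma~\ref{lemma2-2}), namely $v=u$. Then $v=\epsilon$ on $\partial D_{\epsilon}$, so your boundary term equals $\epsilon\,g(\epsilon)$ with $g(\epsilon)=\int_{\{u=\epsilon\}}e^{-\phi}|\nabla u|\,d\sigma$, and the co--area formula gives $g\in L^{1}(0,\delta)$; hence $\liminf_{\epsilon\to 0}\epsilon\,g(\epsilon)=0$, since otherwise $g(\epsilon)\geq c/\epsilon$ for small $\epsilon$, contradicting integrability. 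You should therefore add the hypothesis that $v$ vanishes on $\partial D$ at a rate comparable to $u$ and replace your final paragraph by this estimate.
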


\begin{proof}
We may assume that $u$ is positive in $D$. For each $\epsilon>0$,
define $D_{\epsilon}:=\{x\in D|u(x)>\epsilon\}$ and a function
$u_{\epsilon}$ as follows:
\begin{eqnarray} \label{2-6}
u_{\epsilon}=\left\{
\begin{array}{ll}
u-\epsilon,~~&\mathrm{in}~D_{\epsilon}, \\[1mm]
0,~~&\mathrm{otherwise}.
\end{array}
\right.
 \end{eqnarray}
Assume that $\{\epsilon_i\}$ is a non-increasing sequence of regular
values of $u$, and $\epsilon_i$ approaches $0$ as $i$ tends to
infinity (BTW, the existence of this sequence can be assured by
Sard's theorem). Let $u_{i}=u_{\epsilon_i}$, $D_i=D_{\epsilon_i}$,
and $\partial D_i$ be the smooth boundary of $D_i$. Applying the
Green's formula to $D_i$, we have
 \begin{eqnarray} \label{2-7}
-\int_{D_i} v \Delta_{\phi} u_i d\eta=\int_{D_i} \nabla u_i \cdot
\nabla v d\eta.
 \end{eqnarray}
Using (\ref{2-6}) and (\ref{2-7}), one can further obtain
\begin{eqnarray} \label{2-8}
\left|-\int_{D} v \Delta_{\phi} u d\eta-\int_{D}\nabla v \cdot\nabla ud\eta\right|&=&\Bigg{|}\int_{D} v \Delta_{\phi} u d\eta - \int_{D_i} v \Delta_{\phi} u_i d\eta- \nonumber\\
 &&\qquad \int_{D_i} \nabla u_i \cdot \nabla v d\eta +  \int_{D}\nabla v \cdot\nabla ud\eta\Bigg{|} \nonumber \\
&\leq& \left|\int_{D\setminus D_i} v \Delta_{\phi} u d\eta\right|  + \left|\int_{D\backslash D_i}\nabla v  \cdot\nabla u d\eta\right|\nonumber\\
&\leq& c|D\setminus D_i|_{\phi},
\end{eqnarray}
where $c$ is a positive constant, and $|\cdot|_{\phi}$ stands for
the weighted volume of a prescribed geometric object. Since
$D=\bigcup_{i=1}^\infty D_i$, it is easy to know that $|D\setminus
D_i|_{\phi}$ tends to $0$ as $i$ tends to infinity. This fact,
together with (\ref{2-8}), implies the conclusion of Lemma
\ref{lemma2-1} directly.
\end{proof}

\begin{lemma} \label{lemma2-2}
Assume that $u$ is an eigenfunction of the eigenvalue problem
(\ref{eigen-D11}). Let $\lambda$ be the eigenvalue corresponding to
$u$, and $D$ be a nodal domain of $u$. Then we have
$\lambda=\lambda_{1,\phi}(D)$, where, by the convention of the usage
of notations in Section \ref{sect-1}, $\lambda_{1,\phi}(D)$
naturally denotes the first Dirichlet eigenvalue of the
Witten-Laplacian on $D$.
\end{lemma}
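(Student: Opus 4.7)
The plan is to establish both $\lambda\ge\lambda_{1,\phi}(D)$ and $\lambda\le\lambda_{1,\phi}(D)$ and combine them. Without loss of generality, $u>0$ on $D$ by the definition of a nodal domain. Since $u$ satisfies $\Delta_\phi u + \lambda u = 0$ on $\Omega\supset D$ and vanishes on $\partial D\subset u^{-1}[0]$, the restriction $u|_D$ is a candidate Dirichlet eigenfunction on $D$ with eigenvalue $\lambda$.

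For $\lambda\ge\lambda_{1,\phi}(D)$, I would apply Lemma~\ref{lemma2-1} with $v=u$ on the nodal domain $D$ and combine with $\Delta_\phi u=-\lambda u$ to reduce $\lambda$ to the Rayleigh quotient
\begin{equation*}
\lambda = \frac{\int_D |\nabla u|^2\, d\eta}{\int_D u^2\, d\eta}.
\end{equation*}
To feed this ratio into the variational characterization (\ref{chr-2}) of $\lambda_{1,\phi}(D)$, I need $u|_D\in\widehat{W}^{1,2}_0(D)$. I would establish this by recycling the truncations $u_{\epsilon_i}$ from the proof of Lemma~\ref{lemma2-1}: each is compactly supported in the smooth subdomain $D_{\epsilon_i}$, whose closure is contained in $D$, so after standard mollification $u_{\epsilon_i}\in C^\infty_0(D)$; dominated convergence as $\epsilon_i\to 0$ yields $u_{\epsilon_i}\to u|_D$ in the Sobolev norm $\widehat{\|\cdot\|}_{1,2}$. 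Hence $u|_D$ lies in the completion $\widehat{W}^{1,2}_0(D)$, and the Rayleigh quotient delivers $\lambda\ge\lambda_{1,\phi}(D)$.

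For the reverse inequality, I would argue by contradiction. Let $w$ denote a first Dirichlet eigenfunction of $-\Delta_\phi$ on $D$. The classical replacement $w\mapsto|w|$ in the Rayleigh quotient together with the strong maximum principle for $-\Delta_\phi$ shows that $w$ may be taken strictly positive on $D$. If $\lambda>\lambda_{1,\phi}(D)$, then $u|_D$ and $w$ are eigenfunctions of $-\Delta_\phi$ on $D$ corresponding to distinct eigenvalues, hence orthogonal with respect to $\widehat{(\cdot,\cdot)}$, forcing $\int_D uw\,d\eta=0$; but $u,w>0$ on $D$ makes this impossible. Therefore $\lambda=\lambda_{1,\phi}(D)$.

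The main obstacle is the Sobolev-space membership $u|_D\in\widehat{W}^{1,2}_0(D)$, since the nodal boundary $\partial D$ may be very irregular. This is precisely the difficulty that forced the introduction of Lemma~\ref{lemma2-1} in Case~II, and the same level-set truncation device based on Sard's theorem resolves it here, converting a potentially delicate trace question into a straightforward approximation in norm.
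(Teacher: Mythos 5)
Your proof is correct, but for the inequality $\lambda\le\lambda_{1,\phi}(D)$ you take a genuinely different route from the paper. Both arguments share the truncation machinery (the superlevel sets $D_\epsilon$ and the cutoffs $u_\epsilon$). For $\lambda\ge\lambda_{1,\phi}(D)$, you make the membership $u|_D\in\widehat{W}^{1,2}_0(D)$ explicit via truncation and mollification and then apply the Rayleigh characterization on $D$ directly; the paper never asserts this membership but instead evaluates the Rayleigh quotient of $u_\epsilon$ on $D_\epsilon$, invokes domain monotonicity $\lambda_{1,\phi}(D_\epsilon)\ge\lambda_{1,\phi}(D)$, and passes to the limit — morally the same content. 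The real divergence is in the reverse inequality. The paper takes a positive first eigenfunction $v_\epsilon$ on $D_\epsilon$, applies Green's formula on the smooth boundary $\partial D_\epsilon$, exploits the Hopf-type sign $\frac{\partial v_\epsilon}{\partial\vec{\nu}}\le 0$ to get $\lambda_{1,\phi}(D_\epsilon)\ge\lambda$, and then needs a separate continuity argument (approximating the Rayleigh infimum on $D$ by a compactly supported $\rho$ and pushing its support into some $D_\epsilon$) to transfer this back to $D$. You instead observe that, once $u|_D\in\widehat{W}^{1,2}_0(D)$ is in hand, $u|_D$ is an honest Dirichlet eigenfunction on $D$ with eigenvalue $\lambda$; then $\widehat{(\cdot,\cdot)}$-orthogonality of eigenfunctions belonging to distinct eigenvalues, combined with the strict positivity of $u|_D$ and of the first eigenfunction $w$ on $D$, makes $\int_D uw\,d\eta=0$ impossible and forces $\lambda=\lambda_{1,\phi}(D)$. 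Your route sidesteps both the Hopf boundary estimate and the $\epsilon\to 0$ continuity step, at the price of front-loading the Sobolev membership claim that the paper avoids entirely. Both are valid; yours is the tidier shortcut, and in fact your positivity–orthogonality argument by itself already yields equality without first needing $\lambda\ge\lambda_{1,\phi}(D)$.
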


\begin{proof}
It is not hard to see that the eigenfunction $u$ is at least $C^2$.
By applying Lemma \ref{lemma2-1}, one has
 \begin{eqnarray*}
\lambda\int_D u^2 d\eta=-\int_D u\Delta_{\phi} u d\eta=\int_D
|\nabla u|^2 d\eta \geq \int_{D_\epsilon} |\nabla u|^2d\eta \geq
\lambda_{1,\phi}(D_\epsilon)\int_{D_\epsilon} u_{\epsilon}^2d\eta,
 \end{eqnarray*}
with the domain $D_{\epsilon}$ and the function $u_{\epsilon}$
constructed as in the proof of Lemma \ref{lemma2-1}. Due to the fact
$\overline{D_\epsilon}\subset D$, and using the Domain monotonicity
of Dirichlet eigenvalues of the Witten-Laplacian (see e.g.
\cite[Lemma 1.5]{DM1}), one knows
$\widehat{W}^{1,2}_{0}(D_\epsilon)\subset\widehat{W}^{1,2}_{0}(D)$
and $\lambda_{1,\phi}(D_\epsilon)\geq \lambda_{1,\phi}(D)$.
Therefore, it follows that
 \begin{eqnarray*}
\lambda\int_D u^2 d\eta \geq \lambda_{1,\phi}(D)\int_{D_\epsilon}
u_{\epsilon}^2 d\eta.
 \end{eqnarray*}
Then, letting $u_\epsilon$ approach $u$ in the sense of
$\widehat{L}^2$, one has
 \begin{eqnarray} \label{2-9}
\lambda \geq \lambda_{1,\phi}(D).
 \end{eqnarray}

On the other hand, define a positive function $v_{\epsilon}\in
\widehat{W}^{1,2}_{0}(D_\epsilon)$ which satisfies the
 equation $\Delta_{\phi}
v_\epsilon=-\lambda_{1,\phi}(D_\epsilon)v_\epsilon$. Using the fact
that $u$ is an eigenfunction of $\lambda$ directly yields
\begin{eqnarray} \label{2-10}
-\int_{D_\epsilon} v_\epsilon\Delta_{\phi}ud\eta=\lambda
\int_{D_\epsilon}v_\epsilon u d\eta.
\end{eqnarray}
Since the boundary $\partial D_{\epsilon}$ is smooth, applying the
Green's formula, it is not hard to get
 \begin{eqnarray} \label{2-11}
-\int_{D_\epsilon} v_\epsilon\Delta_{\phi} ud\eta&=&-\int_{D_\epsilon}u \Delta_{\phi} v_\epsilon d\eta+\int_{\partial D_\epsilon}u \frac{\partial v_\epsilon}{\partial \vec{\nu}}d\eta\nonumber\\
&=&\lambda_{1,\phi}(D_\epsilon)\int_{D_\epsilon}u v_\epsilon d\eta
+\int_{\partial D_\epsilon}u \frac{\partial v_\epsilon}{\partial
\vec{\nu}}d\eta,
 \end{eqnarray}
where $\vec{\nu}$ denotes the outward unit normal vector field along
$\partial D_{\epsilon}$. Since $\frac{\partial v_\epsilon}{\partial
\vec{\nu}}\leq 0$, we have from (\ref{2-10})-(\ref{2-11}) that
$\lambda_{1,\phi}(D_\epsilon)\geq \lambda$. For any positive number
$a>0$, by using the characterization (\ref{chr-2}) with $\Omega=D$,
there should exist $\rho\in C^{\infty}(D)$, which is compactly
supported on $D$, such that
$\lambda_{1,\phi}(D)\geq\inf\left\{\frac{\int_D |\nabla \rho|^2
d\eta}{\int_D \rho^2 d\eta}\right\}-a$. Based on the choice of the
function $\rho$, there must exist $\epsilon>0$ for which
$\mathrm{supp}(\rho)\subseteq D_{\epsilon}$, and then by using the
characterization (\ref{chr-2}) with $\Omega=D_\epsilon$, one gets
$\lambda_{1,\phi}(D_\epsilon)\leq \inf\left\{\frac{\int_D |\nabla
\rho|^2 d\eta}{\int_D \rho^2 d\eta}\right\}$ directly. So, for a
given $a>0$, there exists $\epsilon>0$ such that
\begin{eqnarray} \label{2-12}
\lambda_{1,\phi}(D)\geq \lambda_{1,\phi}(D_\epsilon)-a\geq\lambda-a.
\end{eqnarray}
Combining (\ref{2-9}) and (\ref{2-12}), using the fact that
$\lambda_{1,\phi}(D_{\epsilon})$ is increasing with respect to
$\epsilon$, and the arbitrariness of $a>0$, one can easily obtain
$\lambda_{1,\phi}(D)=\lambda$.
\end{proof}

As one knows that if $D$ is a bounded domain, $u\in
\widehat{W}^{1,2}(D)\cap C^0(\overline{D})$ and $u|_{\partial D}=0$,
then $u\in\widehat{W}_{0}^{1,2}(D)$. So, using Lemma \ref{lemma2-1},
Lemma \ref{lemma2-2}, and a similar argument to the proof of
\textbf{ Case I}, we can get the nodal domain theorem for the
Witten-Laplacian in \textbf{Case II}.

\section{Nodal lines and multiplicities of eigenvalues of the
Witten-Laplacian} \label{sect-3}
\renewcommand{\thesection}{\arabic{section}}
\renewcommand{\theequation}{\thesection.\arabic{equation}}
\setcounter{equation}{0}

Assume that $f$ satisfies the equation $(\Delta_{\phi}+h(x))f=0$,
$h\in C^{\infty}(M^n)$, and assume that $x_0$ is a zero point of
$f(x)$, i.e. $f(x_0)=0$. So, one can assume that $M^n$ locates
within a small neighborhood of $x_0$. We use normal coordinates
around $x_0$ and hence we can assume we are working in a small open
set of the origin of $\mathbb{R}^n$. Using the results of N.
Aronszajn \cite{NA}, $f$ can only vanish up to finite order around
the origin. Hence, by L. Bers' main result in \cite{LB} (see also
\cite[Theorem 2.1]{SYC}), it follows that
\begin{eqnarray}
f(x)=\mathcal{P}_{N}(x)+O(|x|^{N+\varepsilon}),
\end{eqnarray}
where $\mathcal{P}_N$ is a homogenous polynomial of degree $N$ and
$\varepsilon\in(0,1)$. Besides, $\mathcal{P}_N$ also satisfies the
osculating equation at the origin. Since the normal coordinates
around $x_0$ have been used, $\mathcal{P}_N$ should be a spherical
harmonic polynomial of degree $N$. In other words, $\mathcal{P}_N$
satisfies
 \begin{eqnarray*}
\left(\sum_{i=1}^{n} \frac{\partial^2}{\partial
x_i^2}\right)\mathcal{P}_N(x)=0.
 \end{eqnarray*}
If $N=1$, $\mathcal{P}_{N}(x)$ is a linear polynomial and
$df(0)\neq0$, the set of nodes around $0$ is a smooth curve. Below,
we will discuss the case that $N$ is strictly greater than $1$.

First, we need the following result.
\begin{lemma} \label{lemma2-3}
(\cite[Lemma 2.3]{SYC})  Assume that $\mathcal{P}_N$ is a
homogeneous harmonic polynomial of degree $N$, $N>1$. Then the set
of nodes of $\mathcal{P}_{N}(x)$ around the origin  has a
singularity at $0$.
\end{lemma}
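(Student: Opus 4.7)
The plan is to prove this by an explicit classification, exploiting the fact that the ambient dimension in the application is $2$. First I would recall the classification of homogeneous harmonic polynomials on $\mathbb{R}^2$: by separation of variables for $\Delta \mathcal{P}_N = 0$ in polar coordinates (or equivalently by identifying $\mathcal{P}_N$ with a real linear combination of $\mathrm{Re}(z^N)$ and $\mathrm{Im}(z^N)$ where $z=x+iy$), every homogeneous harmonic polynomial of degree $N$ must have the form
\begin{equation*}
\mathcal{P}_N(r,\theta) = r^N \bigl(a\cos(N\theta) + b\sin(N\theta)\bigr) = c\, r^N \cos\bigl(N(\theta-\theta_0)\bigr),
\end{equation*}
for suitable real constants $(a,b)\neq(0,0)$, and correspondingly $c\neq 0$, $\theta_0\in\mathbb{R}$.

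Next I would read off the nodal set from this explicit form. For $r>0$, the equation $\mathcal{P}_N=0$ is equivalent to $\cos(N(\theta-\theta_0))=0$, i.e.\ $\theta = \theta_0 + \tfrac{\pi}{2N} + \tfrac{k\pi}{N}$ for $k=0,1,\ldots,2N-1$. Hence the nodal set of $\mathcal{P}_N$ in any neighborhood of the origin is the union of exactly $2N$ rays emanating from $0$, equally spaced at angular increment $\pi/N$, together with the point $0$ itself.

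Finally I would conclude that $0$ is a genuine singularity of this zero set whenever $N\geq 2$. Indeed, for $N\geq 2$ the nodal set contains at least four distinct rays meeting at $0$, so it cannot coincide near the origin with the image of a $C^1$ regular curve: such a curve would possess a unique tangent line at $0$, which would force all nearby nodal points to cluster along a single line through $0$, contradicting the presence of rays in at least two linearly independent directions. The only subtle point in the argument is that $d\mathcal{P}_N(0)=0$ when $N\geq 2$ (so the implicit function theorem is unavailable and one cannot invoke a smooth level-set description directly); this is precisely what makes the explicit polar expansion necessary, and it is also what supplies the ``equiangular'' feature used in the second bullet of Corollary \ref{coro-1}, since the $2N$ nodal rays meet at the constant angle $\pi/N$.
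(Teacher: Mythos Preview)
Your argument is correct for the two-dimensional case, which is the setting in which the lemma is actually applied (Theorem~\ref{theo-2} and Corollary~\ref{coro-1} concern Riemannian $2$-manifolds). Note, however, that the paper does not supply its own proof of this lemma: it is quoted directly from Cheng \cite[Lemma~2.3]{SYC}, so there is no ``paper's proof'' to compare against. That said, your approach---writing $\mathcal{P}_N$ in polar form as $c\,r^N\cos(N(\theta-\theta_0))$ and reading off the $N$ lines through the origin---is exactly the classical argument Cheng uses, so you have in effect reconstructed the cited proof. One small wording suggestion: rather than ``at least four distinct rays,'' it is slightly cleaner to say the nodal set near $0$ consists of $N\geq 2$ distinct lines through the origin, hence cannot be a $C^1$ embedded arc there.
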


We also need:
\begin{lemma} \label{lemma2-4}
(\cite[Lemma 2.4]{SYC}) Suppose that $f$, $p$ are smooth function in
$\mathbb{R}^n$,
 \begin{eqnarray*}
&&f(x)=p(x)+O(|x|^{N+\varepsilon})\\
&&\frac{\partial f(x)}{\partial x_i}=\frac{\partial p(x)}{\partial x_i}+O(|x|^{N-1+\varepsilon}),\quad N\geq1, \quad \varepsilon\in(0,1)\\
&&\frac{\partial^v }{\partial x_1^{i_1}...x_n^{i_n}}p(0)=0,\quad
0\leq v\leq N-1
\end{eqnarray*}
and
\begin{eqnarray*}
|\nabla p|\geq const|x|^{N-1}.
\end{eqnarray*}
Then there exists a local $C^1$ diffeomorphism $\Phi$ fixing the
origin such that
\begin{eqnarray*}
f(x)=p(\Phi (x)).
\end{eqnarray*}
\end{lemma}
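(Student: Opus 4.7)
The plan is to construct $\Phi$ explicitly as a perturbation of the identity, $\Phi(x)=x+\psi(x)$, and solve the functional equation $p\bigl(x+\psi(x)\bigr)=f(x)$ by a Newton-type iteration. The structural fact that makes this work is the lower bound $|\nabla p(x)|\geq c|x|^{N-1}$: away from the origin $p$ has no critical points in a punctured neighborhood, so its level sets form a smooth foliation there, and one can hope to straighten $f$ onto $p$ by sliding each point along the gradient direction of $p$.

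For the leading-order step I would take the ansatz
$$
\psi_0(x)=\frac{f(x)-p(x)}{|\nabla p(x)|^{2}}\,\nabla p(x),
$$
which is designed so that the first-order Taylor expansion of $p$ at $x$ exactly absorbs the discrepancy $f-p$. Using $f-p=O(|x|^{N+\varepsilon})$ together with $|\nabla p|\geq c|x|^{N-1}$ gives $|\psi_0(x)|=O(|x|^{1+\varepsilon})$, so $\psi_0(0)=0$. The derivative hypothesis $\partial_i f-\partial_i p=O(|x|^{N-1+\varepsilon})$, combined with the polynomial bound $|D^{2}p|=O(|x|^{N-2})$, is precisely what is needed to check that $D\psi_0(x)\to 0$ as $x\to 0$; consequently $\Phi_1:=\mathrm{id}+\psi_0$ is already a local $C^1$ diffeomorphism fixing the origin with $D\Phi_1(0)=I$.

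To upgrade this from a first-order to an exact solution, I would iterate: set $\Phi_0=\mathrm{id}$ and
$$
\Phi_{k+1}(x)=\Phi_k(x)+\frac{f(x)-p\bigl(\Phi_k(x)\bigr)}{\bigl|\nabla p\bigl(\Phi_k(x)\bigr)\bigr|^{2}}\,\nabla p\bigl(\Phi_k(x)\bigr).
$$
A Taylor-remainder computation shows that the residual $e_k:=f-p\circ\Phi_k$ improves \emph{quadratically}: if $e_k=O(|x|^{N+m\varepsilon})$, then $e_{k+1}=O(|x|^{N+2m\varepsilon})$, because the error coming from the quadratic term in Taylor's formula has size $|D^{2}p|\,|\psi_k|^{2}=O(|x|^{N-2})\cdot O(|x|^{2+2m\varepsilon})$. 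Hence the vanishing order of $e_k$ tends to infinity, and summing the corrections $\Phi_{k+1}-\Phi_k$ yields a limit $\Phi$ with $p\circ\Phi=f$.

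The main obstacle is organizing this iteration so that it converges in the $C^1$ sense uniformly in a neighborhood of the origin, since the division by $|\nabla p|^{2}$ becomes singular there. I would work in a scale-invariant weighted $C^1$ space, for instance with norm
$$
\|\psi\|:=\sup_{x\neq0}\Bigl(|\psi(x)|\,|x|^{-(1+\varepsilon/2)}+|D\psi(x)|\,|x|^{-\varepsilon/2}\Bigr),
$$
so that the boundary behaviour $\psi(0)=0$, $D\psi(0)=0$ is automatically encoded and any map small in this norm is a $C^1$ perturbation of the identity. Verifying that the step $\Phi_k\mapsto\Phi_{k+1}$ is a contraction in this norm reduces to careful bookkeeping of how $|\nabla p|\geq c|x|^{N-1}$ interacts with the polynomial growth of $D^{2}p$ and with the decay of $e_k$; once contractivity is obtained on a small enough ball, the standard $C^1$ inverse function theorem promotes the limiting $\Phi$ to a genuine local $C^1$ diffeomorphism fixing the origin, which is the conclusion of the lemma.
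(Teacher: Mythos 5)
The paper does not prove Lemma~\ref{lemma2-4}; it imports it verbatim from Cheng \cite[Lemma~2.4]{SYC}, whose argument (following Kuiper) is a one-shot homotopy (Moser) method: interpolate $f_t=p+t(f-p)$, observe that $|\nabla f_t|\geq\tfrac{c}{2}|x|^{N-1}$ near $0$ uniformly in $t$, and flow along the time-dependent field $X_t=-(f-p)\nabla f_t/|\nabla f_t|^2$; the estimates you wrote down for $\psi_0$ are precisely what is needed to show $X_t$ is $C^1$ with $X_t(0)=0$ and $DX_t(0)=0$, so the flow produces the diffeomorphism in a single step. Your Newton iteration is a genuinely different route (and $\psi_0$ is, morally, one Euler step of that flow), but it has a real gap as written.

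The gap: to get $C^1$ convergence of $\Phi_k$ you must show that the \emph{derivatives} of the residuals improve along with the residuals, i.e.\ $|De_k|=O(|x|^{N-1+2^k\varepsilon})$, not just $|e_k|=O(|x|^{N+2^k\varepsilon})$. This is not automatic ($|e|\le C|x|^\alpha$ does not bound $|De|$ for a smooth $e$), and it is not addressed in your sketch. If one expands $De_1 = Df - Dp(\Phi_1)(I+D\psi_0)$, the three pieces $De_0$, $D^2p\,\psi_0$, and $Dp\cdot D\psi_0$ are each only $O(|x|^{N-1+\varepsilon})$; the improvement to $O(|x|^{N-1+2\varepsilon})$ appears only after an exact algebraic cancellation, using $\nabla p\cdot\psi_0=e_0$ and the particular form of $\psi_0$, namely $De_0-D^2p\,\psi_0-Dp\cdot D\psi_0\equiv 0$. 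Without exhibiting this cancellation, the claim that $\|D\psi_k\|$ is summable in your weighted norm is unsupported: one would only obtain $D\psi_k=O(|x|^\varepsilon)$ with $k$-dependent constants, and the series could fail to converge in $C^1$. Separately, a Newton step is not literally a contraction --- its linearization at the fixed point vanishes --- so invoking the Banach contraction framework is not the right language; you should instead track the doubly-exponential gain in the residuals and verify that the compounding constants are dominated by the shrinking powers of $|x|$ on a small enough ball. The homotopy proof sidesteps all of this bookkeeping, which is why it is the standard one.
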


\begin{proof} [Proof of Theorem \ref{theo-2}]
The nodal set at the origin for $\mathcal{P}_N$ is equal to
$\{tx:~t>0,\mathcal{P}_N|_{\mathbb{S}^1}(x)=0\}$. Assume that $\pi$
is a set consisting of single points, and then
$\mathcal{P}^{-1}_N(0)\setminus \pi=\gamma_0$ is a smooth curve.
Therefore, by Lemma \ref{lemma2-4}, we can obtain that
$f^{-1}(0)\setminus \Phi^{-1}(\pi)=\Phi^{-1}(\gamma_0)$ and
$\Phi^{-1}(\gamma_0)$ is $C^1$ curve. Assume
$y\in\Phi^{-1}(\gamma_0)$, and then we have $f(y)=0$ and
$\Phi(y)\in\gamma_0$. So, by L. Bers' main result in \cite{LB} (see
also \cite[Theorem 2.1]{SYC}), we have $f(x)\sim
\mathcal{P}_{\hat{N}}(x)$ around the point $y$ for some $\hat{N}>0$.
Assume that $\hat{N}>1$. Because $\Phi^{-1}(\gamma_0)$ is a $C^1$
diffeomorphism in a small neighborhood near $y$ and the nodal set of
$\mathcal{P}_{\hat{N}}$ near the origin is $C^1$ diffeomorphic, but
Lemma \ref{lemma2-3} shows that if $\hat{N}>1$, then
$\mathcal{P}_{\hat{N}}$ should have singularity at $0$, which
contradicts with the fact that $\Phi^{-1}(\gamma_0)$ is a $C^1$
smooth curve. This means our assumption cannot be true, and so
$\hat{N}=1$. Summing up, $\Phi^{-1}(\gamma_0)$ can only be a smooth
curve, which completes the proof of Theorem \ref{theo-2}.
\end{proof}

The assertions of Corollary \ref{coro-1} can be obtained by directly
applying Theorem \ref{theo-2}.

At the end, we would like to give a characterization to
multiplicities to the closed eigenvalues of the Witten-Laplacian on
compact Riemann surface with genus $g$. However, first we need the
following proposition, which is a direct consequence of Theorem
\ref{theo-1}.

\begin{proposition} \label{prop-1}
Assume that $M^2$ is a compact $2$-dimensional smooth manifold with
or without boundary. We have:

(1) If the boundary $\partial M^2\neq\emptyset$ is non-empty, then
the number of nodal domains of the $i$-th Dirichlet eigenvalue
$\lambda_{i,\phi}$ is less than or equal to $i$;

(2) If $\partial M^2 = \emptyset$, then the number of nodal domains
of the $i$-th closed eigenvalue $\lambda^{c}_{i,\phi}$ is less than
or equal to $i+1$.
\end{proposition}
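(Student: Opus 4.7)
The plan is to recognize that Proposition \ref{prop-1} is simply the two-dimensional specialization of Theorem \ref{theo-1}, obtained by taking $n=2$ and letting the entire surface $M^2$ play the role of the ``regular domain'' $\Omega$ in that theorem. Since the hypotheses of Theorem \ref{theo-1} cover both the Dirichlet case (when $\Omega$ has boundary) and the closed case (when $\Omega$ has no boundary), no new machinery is needed here; the proposition serves merely to isolate the planar statements in a form convenient for the later proof of Theorem \ref{theo-3}.

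More concretely, for assertion (1), I would invoke the Dirichlet part of Theorem \ref{theo-1} with $\Omega=M^2$, a compact smooth $2$-manifold with non-empty smooth boundary. Any such $M^2$ is automatically a regular domain on itself, so the conclusion of Theorem \ref{theo-1} applies verbatim: the number of nodal domains of any eigenfunction of the $i$-th Dirichlet eigenvalue $\lambda_{i,\phi}(M^2)$ is at most $i$. For assertion (2), I would invoke the closed-eigenvalue part of Theorem \ref{theo-1} with $\Omega=M^2$ a boundaryless compact smooth surface. The indexing convention from \eqref{sequence-3-1} is compatible: for $i=0$ the only eigenfunctions are the nonzero constants, which have the single nodal domain $M^2$, consistent with the bound $i+1=1$, while for $i\geq 1$ the theorem yields at most $i+1$ nodal domains, as claimed.

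Because the dimension $n=2$ played no role in the arguments of Section \ref{sect-2} (neither the Rayleigh-quotient manipulation of \textbf{Case I}, nor Lemmas \ref{lemma2-1}--\ref{lemma2-2} and the unique-continuation/maximum-principle step of \textbf{Case II}, used dimensionality), no additional obstacle is anticipated. If anything needs spelling out, it is only the verification that the compact smooth surface $M^2$ (with or without smooth boundary) qualifies as a regular domain in the sense under which Theorem \ref{theo-1} was proved, which is immediate. I would therefore write the proof as a one-line specialization of Theorem \ref{theo-1} to $n=2$, together with a parenthetical remark on the indexing of the closed spectrum. The real content of this section then begins with Theorem \ref{theo-3}, where Proposition \ref{prop-1}(2) is combined with topological information coming from the genus.
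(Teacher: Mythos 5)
Your proposal is correct and coincides exactly with the paper's treatment: the paper introduces Proposition \ref{prop-1} as ``a direct consequence of Theorem \ref{theo-1}'' and gives no further argument, which is precisely the one-line specialization (take $n=2$ and $\Omega=M^2$) that you describe. Your extra remarks on the indexing of the closed spectrum and on why a compact surface with smooth boundary qualifies as a regular domain are accurate but not needed beyond what the paper itself assumes.
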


We also need:

\begin{lemma} \label{lemma3-4}
(\cite[Lemma 3.1]{SYC}) Assume that $M^2$ is a compact Riemann
surface with genus $g$, and $\psi_i:\mathbb{S}^1\rightarrow M^2$ is
a closed $C^1$ curve on $M^2$. If
$\psi_i(\mathbb{S}^1)\cap\psi_k(\mathbb{S}^1)$, $i\neq k$, consists
of only a finite number of points, then
$M^{2}\setminus\psi_1(\mathbb{S}^1)\cup\cdots\cup\psi_{2g+t}(\mathbb{S}^1)$
is at least $(t+1)$-connected.
\end{lemma}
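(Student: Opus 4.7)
This lemma is a purely topological statement about the closed surface $M^2$; the Witten--Laplacian and the eigenvalue problem play no role. My plan is to deduce it from the long exact sequence of the pair $(M^2,\Gamma)$ with $\mathbb{Z}/2$-coefficients, where $\Gamma:=\bigcup_{i=1}^{2g+t}\psi_i(\mathbb{S}^1)$. Using $\mathbb{Z}/2$ avoids orientation issues, and working with the pair sidesteps the combinatorial bookkeeping that would complicate a direct Euler-characteristic count when the components of $\Gamma$ are disjoint.

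I would first give $\Gamma$ the structure of a finite CW 1-complex whose $0$-cells are the (finitely many) pairwise intersection points of the $\psi_i$'s and whose $1$-cells are the resulting arcs. The hypothesis that distinct $\psi_i$ meet in only finitely many points implies that each open arc belongs to exactly one $\psi_i$, so the cycles $[\psi_1],\ldots,[\psi_{2g+t}]$ have pairwise disjoint edge supports and are therefore linearly independent in $H_1(\Gamma;\mathbb{Z}/2)$. Since the inclusion-induced map
\[
i_*\colon H_1(\Gamma;\mathbb{Z}/2)\longrightarrow H_1(M^2;\mathbb{Z}/2)\cong(\mathbb{Z}/2)^{2g}
\]
lands in a space of dimension $2g$, its restriction to $\mathrm{span}\{[\psi_i]\}$ has kernel of dimension at least $(2g+t)-2g=t$, and hence $\dim\ker i_*\geq t$.

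To convert this into a lower bound on the number $k$ of connected components of $M^2\setminus\Gamma$, I would invoke the exact sequence
\[
0\to H_2(M^2)\to H_2(M^2,\Gamma)\to H_1(\Gamma)\xrightarrow{i_*}H_1(M^2)
\]
(using $H_2(\Gamma;\mathbb{Z}/2)=0$) together with the Lefschetz-duality identification
\[
H_2(M^2,\Gamma;\mathbb{Z}/2)\cong H^0(M^2\setminus\Gamma;\mathbb{Z}/2)\cong(\mathbb{Z}/2)^{k}.
\]
Since $\dim H_2(M^2;\mathbb{Z}/2)=1$, exactness forces $\dim\ker i_*=k-1$, and combining with the previous paragraph yields $k\geq t+1$, which is exactly the claim.

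The main obstacle is the duality identification $H_2(M^2,\Gamma;\mathbb{Z}/2)\cong(\mathbb{Z}/2)^k$. Concretely, I would replace $\Gamma$ by an open regular neighborhood $N(\Gamma)\subset M^2$, excise to obtain $H_2(M^2,\Gamma)\cong H_2\bigl(M^2\setminus\Gamma,\partial N(\Gamma)\bigr)$, and then apply Poincar\'e--Lefschetz duality on the compact surface-with-boundary $M^2\setminus N(\Gamma)^{\circ}$ so that each connected component of the complement contributes one generator. The $C^1$-regularity of the $\psi_i$ is not a genuine difficulty: a $C^1$ curve in a smooth surface admits a smooth tubular neighborhood, and the finite-pairwise-intersection hypothesis ensures that $\Gamma$ is a genuine finite CW subcomplex of $M^2$ after routine local smoothing, so the above machinery applies verbatim.
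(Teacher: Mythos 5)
The paper itself gives no proof of this lemma: it is stated as a direct citation of \cite[Lemma 3.1]{SYC}, so there is no in-paper argument to compare yours against. I will therefore assess the proposal on its own terms. The global structure of your argument is sound: the exact sequence
\[
0\to H_2(M^2;\mathbb{Z}/2)\to H_2(M^2,\Gamma;\mathbb{Z}/2)\xrightarrow{\partial} H_1(\Gamma;\mathbb{Z}/2)\xrightarrow{i_*}H_1(M^2;\mathbb{Z}/2)
\]
does give $\dim H_2(M^2,\Gamma;\mathbb{Z}/2)=1+\dim\ker i_*$, the passage to a regular neighborhood and Poincar\'e--Lefschetz duality on the compact complementary surface-with-boundary correctly identifies $H_2(M^2,\Gamma;\mathbb{Z}/2)$ with $(\mathbb{Z}/2)^k$ where $k$ is the number of components of $M^2\setminus\Gamma$, and $\dim H_1(M^2;\mathbb{Z}/2)=2g$ gives the desired pigeonhole bound. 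This is quite close in spirit to the Euler-characteristic/Mayer--Vietoris counting that underlies the classical argument, just packaged through the pair sequence and duality rather than through $\chi$.

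There is, however, a genuine gap in the linear-independence step. Pairwise disjoint edge supports only imply linear independence in $H_1(\Gamma;\mathbb{Z}/2)$ if each class $[\psi_i]:=(\psi_i)_*[\mathbb{S}^1]$ is itself nonzero, and this can fail under the stated hypotheses: a $C^1$ map $\psi_i$ that wraps twice around an embedded circle covers every edge of its image an even number of times, so $[\psi_i]=0$ mod $2$ even though $\psi_i(\mathbb{S}^1)$ is a genuine circle and removing it still separates the surface. In that situation your span degenerates and the pigeonhole bound on $\dim\ker i_*$ collapses, so the proof as written does not establish the lemma in the generality in which it is stated. The repair is straightforward: instead of $(\psi_i)_*[\mathbb{S}^1]$, take the cycle $z_i\in Z_1(\Gamma;\mathbb{Z}/2)$ that assigns coefficient $1$ to every edge lying in $\psi_i(\mathbb{S}^1)$. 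Every vertex of the subgraph $\psi_i(\mathbb{S}^1)$ has even degree (each preimage point on $\mathbb{S}^1$ contributes two local edge-ends), so $z_i$ is a cycle; it is nonzero as long as $\psi_i$ is nonconstant (and for constant $\psi_i$ the lemma itself is false, so some such nondegeneracy must be assumed anyway); and the $z_i$ still have pairwise disjoint supports by the finite-intersection hypothesis. With $z_i$ in place of $[\psi_i]$ the rest of your argument goes through unchanged and does prove the claim.
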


Assume that $u$ is an eigenfunction of $\lambda^{c}_{i,\phi}$. If
$x_0$ is a zero point of $u$ (i.e. $u(x_0)=0$), and the zero order
of $x_0$ is $k$, then by Corollary \ref{coro-1}, we know that there
will be $k$ nodal lines starting from $x_0$. So, if $k>2g+i$, then
by Lemma \ref{lemma3-4}, one has that $M^2$ is at least
$(i+1)$-connected. However, by Proposition \ref{prop-1}, $M^2$
should be at most $i$-connected. This is a contradiction, and then
we have $k\leq2g+i$.

\begin{proof}[Proof of Theorem \ref{theo-3}]
We first prove the case $g=0$ and $i=1$. Based on the previous
argument about the order of the zeros of eigenfunctions, the order
of vanishing on the nodal lines is less than or equal to $1$. If the
multiplicity of the first eigenvalue $\lambda^{c}_{1,\phi}(M^2)$ is
$4$, then we can find $4$ linearly independent eigenfunctions
$\psi_{i}$ satisfying
\begin{eqnarray*}
\Delta_\phi \psi_{i}
+\lambda^{c}_{1,\phi}(M^2)\psi_{i}=0,~~i=1,2,3,4.
\end{eqnarray*}
One can find $a_i$, $b_i\in \mathbb{R}$, $1\leq i\leq 3$, such that
\begin{eqnarray*}
a_{i}^{2}+b_{i}^{2}\neq 0
\end{eqnarray*}
and
\begin{eqnarray*}
(a_i\psi_{i+1}-b_{i}\psi_{1})(x_0)=0.
\end{eqnarray*}
Notice that $a_i\psi_{i+1}-b_{i}\psi_{1}$, $i=1,2,3$, are again
linearly independent. Because the tangent space is $2$-dimensional,
we can find $3$ real numbers $c_1,c_2,c_3$, which are not all zero,
such that
\begin{eqnarray*}
\sum_{i=1}^{3}c_{i}d(a_i\psi_{i+1}-b_{i}\psi_{1})(x_0)=0
\end{eqnarray*}
and
\begin{eqnarray*}
\sum_{i=1}^{3}c_{i}(a_i\psi_{i+1}-b_{i}\psi_{1})(x_0)=0.
\end{eqnarray*}
This is contradict with the fact that the order of the zero point of
the first non-trivial eigenfunction at $x_0$ must be less than or
equal to $1$. Therefore, in the case $g=0$ and $i=1$, the
multiplicity of $\lambda^{c}_{1,\phi}(M^2)$ is less than or equal to
$3$. The other cases can be proven almost the same. This is because
on $\mathbb{R}^2$ the dimension of the space of constant coefficient
partial differential operator of order less than or equal to $k$ is
equal to $\sum_{i=1}^{k+1}i$. The proof of Theorem \ref{theo-3} is
finished.
\end{proof}

\section*{Acknowledgments}
\renewcommand{\thesection}{\arabic{section}}
\renewcommand{\theequation}{\thesection.\arabic{equation}}
\setcounter{equation}{0} \setcounter{maintheorem}{0}

This research was supported in part by the NSF of China (Grant Nos.
11801496 and 11926352), the Fok Ying-Tung Education Foundation
(China), the Key Project of Jiangxi Provincial Natural Science
Foundation (Grant No. 20252BAC250003), Hubei Key Laboratory of
Applied Mathematics (Hubei University), and Key Laboratory of
Intelligent Sensing System and Security (Hubei University), Ministry
of Education.

\end{document}